\documentclass[12pt]{amsart}
\usepackage{amssymb,xcolor}
\usepackage{amsmath}
\usepackage{stmaryrd}
\usepackage{graphicx}
\usepackage{mathrsfs}
\usepackage{url}
\usepackage{cite}
\usepackage{fullpage}
\usepackage[hidelinks]{hyperref}
\usepackage{array}
\usepackage{enumitem}

\usepackage{booktabs}

\newtheorem{thm}{Theorem}[section]
\newtheorem*{pgconj}{Prym--Green Conjecture}
\newtheorem{prop}[thm]{Proposition}
\newtheorem{lem}[thm]{Lemma}

\theoremstyle{definition}
\newtheorem{rem}[thm]{Remark}

\newcommand{\F}{\mathbb{F}}
\newcommand{\PP}{\mathbb{P}}

\newcommand{\CC}{\mathbb{C}}

\newcommand{\cO}{\mathcal{O}}
\newcommand{\cC}{\mathcal C}
\newcommand{\cL}{\mathcal L}
\newcommand{\cE}{\mathcal E}
\newcommand{\Aff}{\mathbb A}
\newcommand{\Rcal}{\mathcal{R}}

\DeclareMathOperator{\Sym}{Sym}
\DeclareMathOperator{\rank}{rank}
\DeclareMathOperator{\Tor}{Tor}
\DeclareMathOperator{\Ext}{Ext}

\DeclareMathOperator{\Proj}{Proj}
\DeclareMathOperator{\vspan}{span}
\DeclareMathOperator{\vdim}{dim}

\author{Sam Payne}
\email{sdpayne@umich.edu}
\author{Thomas Willwacher}
\email{thomas.willwacher@math.ethz.ch}

\thanks{
S.P. was supported in part by NSF grant DMS--2542134 and a Simons Fellowship.}

\title{The Prym--Green conjecture in even genera up to 30}

\begin{document}

\begin{abstract}
We prove the Prym--Green conjecture in every even genus from $20$ to $30$. The genus $24$ case disproves the suggestion of Chiodo--Eisenbud--Farkas--Schreyer that the conjecture might fail in every genus divisible by $8$. Our proof uses rational nodal curves with cyclic symmetry to decompose the relevant Koszul differential into a direct sum of smaller maps indexed by characters. We carry out the resulting rank computations over finite fields using the block Wiedemann algorithm.
\end{abstract}

\maketitle

\section{Introduction}\label{sec:intro}

Let $\Rcal_g$ be the moduli space of Prym curves over $\CC$. Its points are pairs $[C,\eta]$, where $C$ is a smooth projective curve of genus $g$ and $\eta$ is a nontrivial line bundle with $\eta^{\otimes 2}\cong\cO_C$. The paracanonical bundle $L=K_C\otimes\eta$ has $g-1$ independent global sections and, for a general pair of genus $g\geq 6$, defines an embedding $C\hookrightarrow\PP^{g-2}$.

Set $S=\Sym H^0(C,L)$. A minimal graded free resolution over $S$ is called \emph{natural} if, for each integer $d$, copies of $S(-d)$ occur in at most one term of the resolution.

\begin{pgconj}
For a general Prym curve $[C,\eta]\in\Rcal_g$ of genus $g\geq6$, the homogeneous coordinate ring of the embedding defined by $L=K_C\otimes\eta$ has a natural minimal graded free resolution over $S$.
\end{pgconj}

For even $g$, this is equivalent to vanishing of the Koszul cohomology group~\mbox{$K_{\frac g2-3,2}(C;L)$}.

\begin{thm}\label{thm:main}
The Prym--Green conjecture holds in all even genera \mbox{$20\leq g\leq 30$}.
\end{thm}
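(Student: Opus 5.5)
The plan is semicontinuity plus degeneration. Vanishing of $K_{\frac g2-3,2}(C;L)$ is an open condition on the universal family over (a partial compactification of) $\Rcal_g$. It therefore suffices to exhibit a single point where it holds. That point can be a stable Prym curve, provided the Koszul cohomology can be computed there and the limiting linear series still gives a flat family of embedded curves.

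Following the abstract, I would take a rational $g$-nodal curve $C$: the normalization is $\PP^1$, and $g$ pairs of points $p_i,q_i$ are glued. The square root $\eta$ is specified by gluing data: $\eta$ is the trivial bundle on $\PP^1$ with gluing constants $\pm1$ at the nodes, not all $+1$. Then $L=\omega_C\otimes\eta$ corresponds to a subspace of $H^0(\PP^1,\cO(g-2))$ cut out by the $g$ gluing conditions $s(p_i)=\epsilon_i\lambda_i s(q_i)$. These are $g-1$ conditions modulo the residue relation, leaving $g-1$ sections.

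I would choose the points with cyclic symmetry. Let $p_i=\zeta^i a$ and $q_i=\zeta^i b$ for a primitive $N$-th root of unity $\zeta$, where $N$ divides $g$ or a suitable multiple, and take the signs $\epsilon_i$ invariant or twisted compatibly. Then $\mu_N$ acts on $C$, on $\eta$ and on $H^0(L)$, and so on the whole Koszul complex
\[
\wedge^{p+1}H^0(L)\otimes H^0(L)\to\wedge^{p}H^0(L)\otimes H^0(L^{2})\to\wedge^{p-1}H^0(L)\otimes H^0(L^{3}),
\]
equivariantly with $p=\frac g2-3$. Because the symmetry is defined over a cyclotomic ring, I would reduce modulo a prime $\ell\equiv1 \pmod N$, which makes $\zeta\in\F_\ell$ available. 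The complex then splits into $N$ character-isotypic summands, each of roughly $1/N$ the size.

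Vanishing of $K_{p,2}$ amounts to a rank statement. By the known dimension formula, $K_{p,2}$ and $K_{p+1,1}$ have equal Euler characteristic contributions, so vanishing of one is equivalent to the Koszul differential having the expected full rank. That rank is the sum over characters of the ranks of the isotypic blocks.

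Next comes the lifting argument. If the rank is maximal over $\F_\ell$, it is maximal over the number field and hence over $\CC$, by lower semicontinuity of rank under specialization. One must also check that the nodal curve is a limit of smooth Prym curves with $L$ very ample, i.e.\ that it lies in the good locus of $\overline{\Rcal}_g$. Then openness gives vanishing at the general point.

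The main obstacle is computational. Even after the $N$-fold splitting, for $g$ up to $30$ the matrices have dimensions in the millions, and dense elimination is infeasible. I would assemble each block as a sparse matrix, using monomial-type bases adapted to the characters, and compute its rank over $\F_\ell$ with block Wiedemann, randomizing with preconditioners to obtain a rank certificate. A secondary risk is that the symmetric curve is too special and the rank drops there, as is expected near genera divisible by $8$. I would then vary $N$, the signs $\epsilon_i$, and the positions $a,b$ until maximal rank is attained for each even $g$ in $[20,30]$.
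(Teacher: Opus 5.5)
Your overall architecture matches the paper: rational $g$-nodal curves over $\F_p$ with a cyclic action, splitting by characters, block Wiedemann, then lifting to characteristic zero and smoothing by openness. As stated, however, it does not prove the theorem for $g=20,24,28$, i.e.\ whenever $m=g/2$ is even, and this includes the headline genus $24$ case.

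For those genera the paper's cyclically symmetric curves are too special. Every isotypic block is injective except the trivial-character block $F_0$, which has a two-dimensional kernel. The authors also report that other choices of points, parameters and group action gave no injective matrices at comparable cost. Your fallback of varying $N$, the signs and the positions ``until maximal rank is attained'' is therefore exactly what fails, and dropping the symmetry gives up the splitting you rely on for feasibility.

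The missing idea is a deformation argument.
\begin{enumerate}
\item Move the nodes to first order, $P_j(t)=P_j+t\dot P_j$ and $Q_j(t)=Q_j+t\dot Q_j$, with the velocities chosen so that the deformation parameters carry characters $\pm1$. Then $F^{(0)}$ is block diagonal, $F^{(1)}$ sends block $c$ only to blocks $c\pm1$, and $F^{(2)}$ sends block $c$ only to blocks $c$ and $c\pm2$.
\item Check that the first-order systems $F^{(0)}_{\pm1}S_{\pm1}=-F^{(1)}_{\pm1,0}K_0$ are solvable, so there is no first-order obstruction.
\item Set $Z_0=F^{(2)}_{0,0}K_0+F^{(1)}_{0,1}S_1+F^{(1)}_{0,-1}S_{-1}$ and check $\rank[F_0^{(0)}\ Z_0]=2+\rank F_0^{(0)}$, via a replacement matrix of the same size as $F_0$. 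Then no nonzero kernel vector extends modulo $t^3$, so $F(t)$ is injective over $\F_p(\!(t)\!)$.
\item Specialize to a closed point of the one-parameter family over a finite extension of $\F_p$.
\end{enumerate}
All of this uses only block-sized computations. It is also why the paper phrases the vanishing as injectivity of a map built from the canonical module: kernel vectors can then be tracked through the family.

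A second, practical gap is that you set up the computation on the unreduced Koszul complex $\bigwedge^{p+1}H^0(L)\otimes H^0(L)\to\bigwedge^{p}H^0(L)\otimes H^0(L^{2})\to\bigwedge^{p-1}H^0(L)\otimes H^0(L^{3})$. At $g=30$ its middle term has dimension $\binom{29}{12}\cdot 87\approx 4.5\cdot 10^{9}$. Even with a $g$-fold splitting, that is about eight times the row count of the paper's largest blocks, which already cost $31.6$ GPU-hours per pass, and Wiedemann cost grows roughly quadratically in the size. The paper reduces in two steps:
\begin{itemize}
\item It first reduces modulo a base-point-free pencil $u,v$ of $C_m$-eigenvectors. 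The splitting survives, and the vanishing becomes injectivity of the single map $\Phi_g\colon\bigwedge^mV\otimes A_0\to\bigwedge^{m-1}V\otimes A_1$, since the incoming term is zero.
\item It then eliminates one more variable using an eigenvector $w$ with $\mu_w$ surjective; the authors say this is what makes $g=28,30$ feasible.
\end{itemize}
The section-ring formulation also settles the point you leave open. On the nodal curve one needs only $H^1(L^{\otimes q})=0$ for $q\geq1$ and a base-point-free pencil, not very ampleness, because normal generation at the general point follows from the vanishing itself.

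Finally, a small slip. Sections of $L$ are forms $f(z)\,dz/\prod_i(z-p_i)(z-q_i)$ with $\deg f\le 2g-2$, not $g-2$. The $g$ gluing conditions are independent because $\eta$ is nontrivial, which is what leaves $g-1$ sections; the residue relation only enters for $\omega_C$ itself.
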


The conjecture was formulated by Farkas and Ludwig \cite[Conjecture~0.7]{FL}. Chiodo--Eisenbud--Farkas--Schreyer \cite{CEFS} verified it computationally in even genera up to $18$, except $8$ and $16$, where they found extra syzygies. A verification in genus $20$ was announced in \cite{CF,St17}. Farkas and Kemeny proved the conjecture in every odd genus \cite[Theorem~1.1]{FK}.

The computational evidence in genera $8$ and $16$ led Chiodo--Eisenbud--Farkas--Schreyer to suggest that the conjecture might fail whenever the genus is divisible by $8$, or perhaps whenever it is a power of $2$. Colombo--Farkas--Verra--Voisin proved that the conjecture fails in genus $8$ \cite{CFVV}. The expected failure in genus $16$ and the possibility of further exceptional genera remain open. The genus $24$ case of Theorem~\ref{thm:main} rules out the suggestion that the conjecture fails in every genus divisible by $8$.

\medskip

We work with rational nodal Prym curves over finite fields. Following \cite{CEFS}, we use Artinian reduction to translate the required vanishing into injectivity of a linear map. We choose curves of genus $g=2m$ with an action of the cyclic group $C_m$. We use a surjective multiplication map to eliminate variables and cyclic symmetry to split the remaining system into $m$ smaller maps indexed by characters. These maps are injective in genera $22$, $26$, and $30$. In genera $20$, $24$, and $28$, their direct sum has a two-dimensional kernel. We deform the curves and show that no nonzero kernel vector extends to second order. This proves the required vanishing on the generic fiber and establishes the conjecture for a general complex Prym curve.

The rank and kernel computations use the block Coppersmith--Wiedemann algorithm \cite{Coppersmith}, which Colombo--Frediani previously applied to Prym--Green computations on binary curves \cite[proof of Corollary~3.4]{CF}. Our implementation applies the maps directly from their defining formulas, without storing their matrices, and performs the main sequence computations on GPUs. The computations take approximately $7.5$ minutes in genus $24$ and $10.6$ days in genus $30$ on the hardware described in Section~\ref{sec:computations}. With our current implementation, the projected runtime in genus $32$ is approximately nine months.

\subsection*{Declaration of AI usage}
We used AI tools (ChatGPT Pro and Codex) for brainstorming, proofreading, and writing computer code.

\section{Artinian reduction and specialization}\label{sec:reduction}

We use the Artinian reduction and specialization method of \cite[Section~4]{CEFS}, that we recall here. Let $g=2m\geq8$, let $k$ be a field of characteristic different from $2$, and let $C$ be a geometrically integral projective curve of arithmetic genus $g$ with at worst nodal singularities. Let $\eta$ be a nontrivial line bundle with $\eta^{\otimes2}\cong\cO_C$, and put
\[
 L=\omega_C\otimes\eta,\qquad W=H^0(C,L),\qquad S=\Sym W,
 \qquad R=\bigoplus_{q\geq0}H^0(C,L^{\otimes q}).
\]
A degree-zero line bundle on $C$ with a nonzero section is trivial. Thus $H^0(C,\eta)=0$, and Serre duality and Riemann--Roch give
\[
 H^1(C,L^{\otimes q})=0,\qquad
 h^0(C,L^{\otimes q})=(2q-1)(g-1)\qquad(q\geq1).
\]
In particular, $\dim W=g-1$. We consider the Koszul cohomology groups
\[
 K_{p,q}(C;L):=\Tor^S_p(R,k)_{p+q}.
\]
Equivalently, $K_{p,q}(C;L)$ is the middle homology of the Koszul complex
\[
 \bigwedge^{p+1}W\otimes R_{q-1}\longrightarrow
 \bigwedge^pW\otimes R_q\longrightarrow
 \bigwedge^{p-1}W\otimes R_{q+1}.
\]
We seek curves satisfying
\begin{equation}\label{eq:critical}
 K_{m-3,2}(C;L)=0.
\end{equation}
For a general smooth Prym curve, this is equivalent to the vanishing in \cite[Conjecture~A]{CEFS}, as explained in Lemma~\ref{lem:pg-equivalence}. The section ring formulation lets us study this condition without assuming that $L$ is very ample or normally generated.

\subsection{The section ring and the pencil}

Choose $u,v\in W$ with no common zero on $C_{\bar k}$. The next lemma shows that $u,v$ form a regular sequence on $R$ and its canonical module.

\begin{lem}\label{lem:pg-section-ring}
The section ring $R$ is a finite Cohen--Macaulay $S$-module of dimension $2$. Its graded canonical module is
\[
 M:=\omega_R=\Ext^{g-3}_S(R,S(-(g-1)))
 \cong\bigoplus_{q\geq0}H^0(C,\omega_C\otimes L^{\otimes q}).
\]
The sections $u,v$ form a regular sequence on both $R$ and $M$. For $N=R$ or $M$, putting $T=S/(u,v)$ gives
\[
 \Tor^S_i(N,k)_j\cong\Tor^T_i\bigl(N/(u,v)N,k\bigr)_j.
\]
\end{lem}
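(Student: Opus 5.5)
We will prove the lemma in four steps: finiteness, depth, identification of the canonical module, and the Tor isomorphism.

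\emph{Finiteness and dimension.} Since $\deg L=2g-2>0$ on the integral curve $C$, $L$ is ample. We first show that $u,v$ form a regular sequence on $R$. Because $u$ and $v$ have no common zero, the Koszul complex
\[
0\to L^{-1}\to \cO_C^{\oplus 2}\to L\to 0
\]
is exact. Twisting by $L^{\otimes q}$ gives exact sequences
\[
0\to L^{q-1}\to (L^{q})^{\oplus2}\to L^{q+1}\to0 .
\]
The section $u$ is a nonzerodivisor on $R$ because $R$ is a domain; indeed $C$ is integral. The kernel of multiplication by $v$ on $R/uR$ is measured by the exactness of
\[
H^0(L^{q-1})\to H^0(L^q)^2\to H^0(L^{q+1})
\]
in the middle. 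This middle exactness holds whenever $H^1(L^{q-1})=0$, and that vanishing holds for $q-1\geq1$ by the computation above. For $q=1$ we need the sequence
\[
0\to H^0(\cO)\to H^0(L)^2\to H^0(L^2)
\]
to be exact. This follows because $H^1(\cO)\to H^1(L)^2$ is injective: dually, $H^0(\eta)^2\to H^0(\omega)$ has image $0$ and $H^1(L)=0$. Correcting this: the relevant statement is simply that the kernel of $(a,b)\mapsto av-bu$ on $H^0(L)^2$ is $H^0(\cO)\cdot(u,v)$. That follows from left exactness of $H^0$, since the kernel is $H^0(L^{0})=k$. The case $q=0$ is trivial. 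So $u,v$ is $R$-regular.

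The cokernel $R/(u,v)R$ vanishes in degrees $q\geq2$, where the map above is surjective because $H^1(L^{q-1})=0$ for $q\geq2$. Hence $R/(u,v)R$ is finite-dimensional. Then $R$ is finite over $k[u,v]\subset S$, so $R$ is finite over $S$ and has dimension $2$ with depth $2$. It is therefore Cohen--Macaulay.

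\emph{Canonical module.} By local duality for the finite CM module $R$ of dimension $2$ over the polynomial ring $S$ in $g-1$ variables, $\omega_R=\Ext_S^{g-3}(R,S(-(g-1)))$. It can be computed as the graded $k$-dual of $H^2_{\mathfrak m}(R)$. Via the Čech/sheaf comparison, $H^2_{\mathfrak m}(R)_q\cong H^1(C,L^q)$ for all $q$. Serre duality then gives $\omega_{R,q}\cong H^1(L^{-q})^\vee\cong H^0(\omega_C\otimes L^q)$, and this identification is compatible with the module structure. The canonical module of a CM module is CM of the same dimension.

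\emph{Regularity on $M$.} Either we run the same twisted-sequence argument with $\omega_C\otimes L^q$ in place of $L^q$, or we note that a system of parameters on a CM module is regular. The second route is simpler: $M$ is CM of dimension $2$ with $M/(u,v)M$ of finite length, as one checks by duality. So $u,v$ is $M$-regular.

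\emph{Tor isomorphism.} Since $u,v$ are linear forms that are regular on both $S$ and $N$, we compute $\Tor^S(N,k)$ using a minimal $S$-free resolution $F_\bullet$ of $N$. Tensoring with $T=S/(u,v)$ gives a complex whose homology is $\Tor^S_i(N,T)$. This homology is the Koszul homology of $u,v$ on $N$, which vanishes for $i>0$ by regularity. So $F_\bullet\otimes T$ is a minimal $T$-free resolution of $N/(u,v)N$, and tensoring further with $k$ gives the stated graded isomorphism.

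The main obstacle is the canonical module identification: we must match $\Ext$ with sheaf cohomology in all degrees, including negative ones, without assuming $L$ is normally generated. We handle this through $H^2_{\mathfrak m}(R)_q=H^1(L^q)$, which holds because $R$ is the full section ring and so $H^0_{\mathfrak m}=H^1_{\mathfrak m}=0$.
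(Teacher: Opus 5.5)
Your proposal is correct and follows the paper's proof step for step: the pencil sequence with left exactness of $H^0$ for regularity on $R$, the vanishing of $H^1(C,L^{\otimes(q-1)})$ for finiteness, graded local duality with the \v Cech comparison for $\omega_R$, maximal Cohen--Macaulayness for regularity on $M$, and base change of a minimal resolution for the Tor statement.

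A few small repairs are needed. The surjectivity step kills $R/(u,v)R$ only in degrees $\geq 3$, since its degree-$2$ part is isomorphic to $H^1(C,\cO_C)$, of dimension $g$. The retracted claim that $H^1(\cO_C)\to H^1(L)^{\oplus 2}$ is injective should be deleted, because left exactness of $H^0$ handles every degree. Finally, writing $M$ as $\bigoplus_{q\geq 0}$ requires checking that $H^0(C,\omega_C\otimes L^{\otimes q})=0$ for $q<0$; for $q=-1$ this is $H^0(C,\eta)=0$, which uses that $\eta$ is nontrivial.
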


\begin{proof}
The pencil gives an exact sequence
\[
 0\longrightarrow L^{\otimes(q-2)}
 \xrightarrow{\,(-v,u)\,}
 (L^{\otimes(q-1)})^{\oplus2}
 \xrightarrow{\,(u,v)\,}L^{\otimes q}\longrightarrow0.
\]
Taking global sections shows that every relation between $u$ and $v$ in $R$ is a Koszul relation. In particular, $vb\in uR$ implies $b\in uR$. Since $u$ is a nonzerodivisor, $u,v$ is an $R$-regular sequence. For $q\geq3$, the vanishing of $H^1(C,L^{\otimes(q-2)})$ also gives
\[
 R_q=uR_{q-1}+vR_{q-1}.
\]
Thus $R$ is finite over $k[u,v]$, and hence over $S$. Its Hilbert function has linear growth, so its dimension is $2$. Regularity of $u,v$ then shows that it is Cohen--Macaulay.

For each integer $q$, graded local duality identifies $(\omega_R)_q$ with $H^2_{S_+}(R)_{-q}^{\vee}$ \cite[Section~3.6]{BH}. The \v Cech complex for $u,v$ gives $H^2_{S_+}(R)_a=H^1(C,L^{\otimes a})$, and consequently
\[
 (\omega_R)_q\cong H^1(C,L^{\otimes(-q)})^\vee
 \cong H^0(C,\omega_C\otimes L^{\otimes q}).
\]
These identifications respect multiplication by sections. The groups on the right vanish for $q<0$, since the corresponding line bundle is $\eta$ for $q=-1$ and has negative degree for $q\leq-2$. The canonical module is maximal Cohen--Macaulay, so $u,v$ is also regular on $M$.

Finally, tensoring a minimal graded free resolution of $N$ with $S/(u,v)$ preserves exactness because $u,v$ is $N$-regular. The resulting resolution is still minimal, proving the assertion about $\Tor$.
\end{proof}

\subsection{The matrix}

Set
\[
 A=M/(u,v)M,\qquad V=W/\langle u,v\rangle,\qquad
 T=S/(u,v)=\Sym V.
\]
Thus $\dim V=g-3$. By Riemann--Roch,
\[
 H_M(t)=g+\sum_{q\geq1}(2q+1)(g-1)t^q,
 \qquad H_A(t)=(1-t)^2H_M(t)=g+(g-3)t+t^2.
\]
In particular $A_0$ and $A_1$ are the spaces used in Section~\ref{sec:artinian}:
\[
 A_0=H^0(C,\omega_C),\qquad
 A_1=H^0(C,\omega_C\otimes L)/(u,v)H^0(C,\omega_C).
\]

Artinian reduction applied directly to $R$ expresses the required vanishing as surjectivity of a Koszul differential on $R/(u,v)R$. In compatible bases, the corresponding differential on $A$ is represented by the transpose matrix. We use the canonical module to express the vanishing as injectivity, so that we can study kernel vectors under deformation in Section~\ref{sec:deformations}.

Multiplication $V\otimes A_0\to A_1$ determines a Koszul differential
\begin{equation}\label{eq:matrixmap}
 \Phi_g\colon\bigwedge^mV\otimes A_0\longrightarrow
 \bigwedge^{m-1}V\otimes A_1.
\end{equation}
We use the same notation for its matrix in chosen bases.

\begin{lem}\label{lem:pg-matrix}
The matrix $\Phi_g$ satisfies
\begin{equation}\label{eq:kernel}
 \dim\ker\Phi_g=\dim K_{m-3,2}(C;L).
\end{equation}
\end{lem}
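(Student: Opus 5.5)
The plan is to pass through the canonical module $M$ by graded duality of minimal free resolutions, then apply the Artinian reduction of Lemma~\ref{lem:pg-section-ring}, and finally read off $\ker\Phi_g$ as a Koszul homology group of $A$ over $T$.

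\textbf{Step 1: duality between $R$ and $M$.} Let $F_\bullet\to R$ be a minimal graded free resolution over $S$, with $F_p=\bigoplus_j S(-j)^{\beta_{p,j}}$ and $\beta_{p,j}=\dim\Tor^S_p(R,k)_j$. By Lemma~\ref{lem:pg-section-ring}, $R$ is Cohen--Macaulay of dimension $2$ over the polynomial ring $S$ in $g-1$ variables. Hence $\operatorname{codim}R=g-3$, the resolution has length $g-3$, and $\Ext^i_S(R,S)=0$ for $i\neq g-3$. It follows that the dual complex $\operatorname{Hom}_S(F_\bullet,S(-(g-1)))$, reindexed by $p\mapsto g-3-p$, is a resolution of
\[
\Ext^{g-3}_S(R,S(-(g-1)))=M.
\]
This resolution is still minimal, because its differentials are the transposes of the original ones and so still have entries in the maximal ideal. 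Its term in homological degree $g-3-p$ is $\bigoplus_j S(j-(g-1))^{\beta_{p,j}}$, which gives
\[
\dim\Tor^S_{g-3-p}(M,k)_{g-1-j}=\dim\Tor^S_p(R,k)_j .
\]

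\textbf{Step 2: specialize the indices.} Take $p=m-3$ and $j=p+2=m-1$. Using $g=2m$, we get $g-3-p=m$ and $g-1-j=m$. Therefore
\[
\dim K_{m-3,2}(C;L)=\dim\Tor^S_m(M,k)_m .
\]
By the last assertion of Lemma~\ref{lem:pg-section-ring}, applied with $N=M$, this equals $\dim\Tor^T_m(A,k)_m$.

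\textbf{Step 3: compute the Artinian $\Tor$ by a Koszul complex.} Resolve $k$ over $T=\Sym V$ by the Koszul complex and tensor with $A$. The graded piece of $A\otimes_T\bigwedge^\bullet V\otimes T$ computing $\Tor^T_m(A,k)_m$ is the homology at the middle term of
\[
\textstyle\bigwedge^{m+1}V\otimes A_{-1}\longrightarrow\bigwedge^mV\otimes A_0\longrightarrow\bigwedge^{m-1}V\otimes A_1 .
\]
The right-hand map is exactly $\Phi_g$ from \eqref{eq:matrixmap}, since it is induced by the multiplication $V\otimes A_0\to A_1$. The left-hand term vanishes because $A_{-1}=M_{-1}/(u,v)M_{-2}=0$, as established in the proof of Lemma~\ref{lem:pg-section-ring}. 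Hence $\Tor^T_m(A,k)_m=\ker\Phi_g$, which proves \eqref{eq:kernel}.

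\textbf{Main obstacle.} The delicate point is Step~1: making sure the dualized resolution is a minimal resolution of precisely $M$, with the twist $S(-(g-1))$ tracked correctly. This relies on the Cohen--Macaulay property and the identification of $\omega_R$, both supplied by Lemma~\ref{lem:pg-section-ring}. Once that is in place, the rest is index bookkeeping, and it is the choice $p=m-3$ in even genus $g=2m$ that makes both homological and internal degree equal to $m$.
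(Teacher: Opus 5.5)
Your proposal is correct and follows the paper's proof: both use the vanishing $A_{-1}=0$ to identify $\ker\Phi_g$ with $\Tor^T_m(A,k)_m\cong\Tor^S_m(M,k)_m$, and then dualize a minimal resolution of the Cohen--Macaulay module $R$ against $S(-(g-1))$ to obtain the index relation with $i=j=m$. The only difference is that you present these steps in the reverse order.
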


\begin{proof}
The preceding term in the degree-$m$ part of the Koszul complex is $\bigwedge^{m+1}V\otimes A_{-1}=0$. Hence
\[
 \ker\Phi_g=\Tor^T_m(A,k)_m\cong\Tor^S_m(M,k)_m.
\]
Since $R$ is a Cohen--Macaulay $S$-module of codimension $g-3$, graded local duality gives $\Ext^a_S(R,S(-(g-1)))=0$ for $a\ne g-3$ \cite[Theorem~3.6.19]{BH}. Dualizing a minimal graded free resolution of $R$ with respect to $S(-(g-1))$ and reversing the resulting complex therefore gives a minimal resolution of $M$. The claim follows by taking $i=j=m$ in
\[
 \dim\Tor^S_i(M,k)_j
 =\dim\Tor^S_{g-3-i}(R,k)_{g-1-j}.\qedhere
\]
\end{proof}

Consequently, \eqref{eq:critical} holds if and only if $\Phi_g$ has full column rank. The dimensions are
\begin{equation}\label{eq:dimensions}
 \#\mathrm{rows}(\Phi_g)=(g-3)\binom{g-3}{m-1},\qquad
 \#\mathrm{columns}(\Phi_g)=g\binom{g-3}{m}.
\end{equation}
Each column has at most $m(g-3)$ nonzero entries, since the Koszul differential has $m$ terms and $\dim A_1=g-3$.

The following lemma relates the two critical Koszul cohomology groups to the resolution of the homogeneous coordinate ring.

\begin{lem}\label{lem:pg-equivalence}
Suppose \mbox{$u,v\in W$} have no common geometric zero. The following are equivalent:
\begin{enumerate}
\item \mbox{$K_{m-3,2}(C;L)=0$};
\item \mbox{$K_{m-2,1}(C;L)=0$};
\item $L$ is normally generated, hence very ample, and its homogeneous coordinate ring has a natural minimal free resolution.
\end{enumerate}
When these conditions hold, the resolution is pure.
\end{lem}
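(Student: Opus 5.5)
The plan is to compare the graded Betti numbers of $R$ and $M$ through the Artinian reduction $A$, and to use Lemma~\ref{lem:pg-section-ring} to pass between $R$, its reduction $\bar R=R/(u,v)R$, and $T=\Sym V$ with $\dim V=g-3=2m-3$.

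\emph{Step 1: Betti numbers of $\bar R$.} By Lemma~\ref{lem:pg-section-ring}, $\beta_{i,j}(R)$ over $S$ equals $\beta_{i,j}(\bar R)$ over $T$. The Hilbert function is $H_{\bar R}(t)=1+(g-3)t+gt^2$. Thus $\bar R_1=V$, and $\bar R_q=0$ for $q\ge3$ by $R_q=uR_{q-1}+vR_{q-1}$; the value in degree $2$ comes from $h^0(L^{\otimes 2})=3(g-1)$ minus $2(g-1)$ plus $1$. Since $\bar R$ is concentrated in degrees $0,1,2$, the Koszul complex shows $K_{p,q}(C;L)=0$ unless $q\in\{0,1,2\}$, and $K_{p,0}=0$ for $p>0$. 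So the Betti table has only two nontrivial rows, $q=1$ and $q=2$.

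\emph{Step 2: Euler characteristic.} In each internal degree $p+q$, the Koszul complex of $\bar R$ over $T$ has Euler characteristic computed from $H_{\bar R}(t)\cdot(1-t)^{2m-3}$. Only two rows survive, so in degree $d$ we have $\dim K_{d-1,1}-\dim K_{d-2,2}=c_d$, where $c_d$ is the coefficient of $t^d$ up to sign. Next I check that $c_{m-1}=0$. By Lemma~\ref{lem:pg-matrix} and the dimensions~\eqref{eq:dimensions}, this amounts to $\#\mathrm{rows}=\#\mathrm{columns}$ after the appropriate shift. The duality $\Tor^S_i(M,k)_j\cong\Tor^S_{g-3-i}(R,k)_{g-1-j}$ identifies $\ker\Phi_g$ with $K_{m-3,2}$, and the cokernel of the square map $\Phi_g$ with $K_{m-2,1}$ (dual indices $i=m-1$, $j=m$ give $g-3-i=m-2$ and $g-1-j=m-1$, i.e.\ $K_{m-2,1}$). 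I will verify the squareness carefully: $(g-3)\binom{2m-3}{m-1}$ versus $g\binom{2m-3}{m}$ are not literally equal. The correct Euler characteristic must therefore include the third term $\bigwedge^{m-2}V\otimes A_2$. This gives $\dim K_{m-3,2}=\dim K_{m-2,1}$ by a direct binomial identity, and hence (1)$\Leftrightarrow$(2). This binomial check is the main computational point.

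\emph{Step 3: (2)$\Rightarrow$(3).} Vanishing propagates. If $K_{p,1}=0$ then $K_{p+1,1}=0$, by the standard fact that linear syzygies of a larger index restrict to a smaller index. Similarly, $K_{p,2}=0$ forces $K_{p-1,2}=0$, which follows dually through $M$. So row $1$ is zero from column $m-2$ on, and row $2$ is zero up to column $m-3$. Every degree then appears in a single homological position, so the resolution of $R$ is natural and pure. Since $K_{0,2}=0$ and $K_{1,1}$-type low terms show $S\to R$ is surjective when $m\ge4$, $R$ is generated in degree $0$. In fact $K_{0,q}=0$ for $q\ge1$ follows from $\bar R$ being generated by $\bar R_1=V$ in degree $2$, which holds because $K_{0,2}$ vanishes whenever row $2$ starts later. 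Hence $L$ is normally generated, hence very ample, and $R$ is the homogeneous coordinate ring.

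\emph{Step 4: (3)$\Rightarrow$(2).} A natural resolution forbids $K_{m-2,1}$ and $K_{m-3,2}$ from both being nonzero, since both sit in degree $m-1$. By Step 2 they have equal dimension, so both vanish. I expect the main obstacle to be the propagation of vanishing along the rows, which I would prove by the Green-type argument of multiplying Koszul classes by elements of $V$ in the Artinian ring.
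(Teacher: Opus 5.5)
Your proposal is correct and follows essentially the paper's proof: the Euler characteristic of the reduction $\bar R$ gives $b_{m-2,1}=b_{m-3,2}$, vanishing propagates along rows $1$ and $2$ by minimality (dually via $M$ for row $2$), $R$ is generated by $R_1$, and naturality forces both critical groups to vanish. The one small difference is that you obtain $\bar R_2=V\bar R_1$ by propagating row $2$ down to $K_{0,2}=0$, whereas the paper uses the surjection of $K_{m-3,2}$ onto $\bigwedge^{m-3}V\otimes \bar R_2/V\bar R_1$; either works.

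In Step~2 the detour through $\Phi_g$ is unnecessary, and its intermediate identification of $\operatorname{coker}\Phi_g$ with $K_{m-2,1}$ is wrong. Just finish your first formulation: using $\binom{2m-3}{m-1}=\binom{2m-3}{m-2}$ and $\binom{2m-3}{m-3}=\tfrac{m-2}{m}\binom{2m-3}{m-2}$, the coefficient of $t^{m-1}$ in $(1-t)^{2m-3}\bigl(1+(2m-3)t+2mt^2\bigr)$ is $\pm\binom{2m-3}{m-2}\bigl(1-(2m-3)+2(m-2)\bigr)=0$.
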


\begin{proof}
Put \mbox{$b_{p,q}=\dim K_{p,q}(C;L)$} and \mbox{$B=R/(u,v)R$}. The Hilbert series of $B$ is
\[
 H_B(t)=(1-t)^2H_R(t)=1+(g-3)t+gt^2.
\]
In particular \mbox{$B_1=V$} and \mbox{$B_q=0$} for \mbox{$q\geq3$}. By Artinian reduction, \mbox{$b_{p,q}=0$} for \mbox{$q\geq3$}. Also \mbox{$b_{p,0}=0$} for \mbox{$p>0$} because \mbox{$R_0=k$} and \mbox{$R_1=W$}. Comparing the coefficient of $t^{m-1}$ in
\[
 \sum_{p,q}(-1)^p b_{p,q}t^{p+q}
 =(1-t)^{g-3}\bigl(1+(g-3)t+gt^2\bigr)
\]
gives \mbox{$b_{m-2,1}=b_{m-3,2}$}, since that coefficient is zero. This proves the equivalence of (1) and (2).

Assume (1), and put \mbox{$p=m-3$}. Artinian reduction gives
\[
 K_{p,2}(C;L)\cong\Tor^T_p(B,k)_{p+2}.
\]
This group surjects onto
\[
 \bigwedge^pV\otimes\bigl(B_2/(V B_1)\bigr),
\]
because the outgoing Koszul differential is zero, while the incoming image lies in \mbox{$\bigwedge^pV\otimes V B_1$}. Since \mbox{$\bigwedge^pV\neq0$}, we obtain \mbox{$B_2=V B_1$}. Lifting generators from $B$ to $R$ now shows that $R$ is generated by $R_1$. Thus $L$ is normally generated. Since $L$ is ample and \mbox{$C=\Proj R$}, it also defines an embedding with homogeneous coordinate ring $R$.

Minimality and exactness imply that \mbox{$b_{i,1}=0$} forces \mbox{$b_{i+1,1}=0$} for \mbox{$i\geq1$} because a generator in the latter term could map only to the degree-\mbox{$i+1$} summands of the preceding term. Applying the same argument to the dual resolution shows that \mbox{$b_{i,2}=0$} implies \mbox{$b_{i-1,2}=0$} for \mbox{$1\leq i\leq g-3$}. Hence the two critical vanishings imply \mbox{$b_{p,1}=0$} for \mbox{$p\geq m-2$} and \mbox{$b_{p,2}=0$} for \mbox{$p\leq m-3$}. The resolution is therefore natural, and in fact pure. Conversely, naturality and \mbox{$b_{m-2,1}=b_{m-3,2}$} force both critical groups to vanish.
\end{proof}

\subsection{Specialization}

Over a finite field $k$ of odd characteristic, choose $2g$ distinct points
\[
 P_1,Q_1,\ldots,P_g,Q_g\in\PP^1(k)
\]
and identify $P_i$ with $Q_i$ for each $i$. The resulting curve $C_0$ is geometrically integral and stable, with $g$ nodes. Gluing the trivial line bundle on its normalization with multipliers in $\{1,-1\}$, at least one of which is $-1$, gives a nontrivial line bundle $\eta_0$ of order two. The section spaces for $L_0=\omega_{C_0}\otimes\eta_0$ and for the canonical module are spaces of binary forms subject to linear gluing conditions. Their bases and multiplication maps can therefore be computed over $k$.

\begin{lem}\label{lem:pg-openness}
Let $\pi\colon\cC\to D$ be a proper flat family of geometrically integral nodal curves of arithmetic genus $g$ over a noetherian base, and let $\cE$ be a line bundle that is nontrivial of order two on every geometric fiber. Set $\cL=\omega_{\cC/D}\otimes\cE$. Then $\mathcal V_q=\pi_*\cL^{\otimes q}$ is locally free and commutes with base change for $q\geq1$. The loci where $L_d$ is globally generated and where $K_{m-3,2}(C_d;L_d)=0$ are open in $D$.
\end{lem}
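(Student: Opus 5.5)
The plan has three parts: cohomology and base change for $\mathcal V_q$, global generation as the complement of a closed support, and the Koszul vanishing as a rank condition on a matrix of locally free sheaves.

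\emph{Local freeness of $\mathcal V_q$.} On every geometric fiber, $L_d=\omega_{C_d}\otimes\eta_d$ with $\eta_d$ nontrivial of order two. The computation at the start of this section then gives
\[
 H^1(C_d,L_d^{\otimes q})=0,\qquad h^0(C_d,L_d^{\otimes q})=(2q-1)(g-1)\qquad(q\geq1).
\]
This argument is valid on geometrically integral nodal curves. A nonzero section of the degree-zero bundle $\eta_d$ would trivialize it, which is impossible. Serre duality then gives
\[
 H^1(L_d^{\otimes q})^\vee=H^0(\eta_d\otimes L_d^{\otimes(1-q)}),
\]
and this vanishes by degree for $q\geq2$ and equals $H^0(\eta_d)=0$ for $q=1$. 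Since $\cC\to D$ is proper and flat and $\cL^{\otimes q}$ is flat over $D$, Grauert's theorem, or the cohomology and base change theorem applied with $H^1=0$ on all fibers, shows that $\pi_*\cL^{\otimes q}$ is locally free of rank $(2q-1)(g-1)$ and commutes with arbitrary base change, and that $R^1\pi_*=0$. The same argument applies to $\pi_*(\omega_{\cC/D}\otimes\cL^{\otimes q})$ for $q\geq1$, and to $\pi_*\omega_{\cC/D}$ for $q=0$, of rank $g$.

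\emph{Global generation.} Consider the evaluation map $\pi^*\mathcal V_1\to\cL$. Its cokernel is coherent, and its support $Z\subset\cC$ is closed. By base change, the restriction to a fiber is the evaluation map $H^0(C_d,L_d)\otimes\cO\to L_d$. Hence $L_d$ is globally generated exactly when $d\notin\pi(Z)$. Because $\pi$ is proper, $\pi(Z)$ is closed, so the globally generated locus is open.

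\emph{Koszul vanishing.} Write $K_{m-3,2}(C_d;L_d)$ as the middle homology of
\[
 \bigwedge^{m-2}W_d\otimes R_{d,1}\to\bigwedge^{m-3}W_d\otimes R_{d,2}\to\bigwedge^{m-4}W_d\otimes R_{d,3}.
\]
By the first step, each term is the fiber of a locally free sheaf built from $\mathcal V_1,\mathcal V_2,\mathcal V_3$, with $R_{d,1}=W_d$. The maps come from the multiplication maps $\mathcal V_1\otimes\mathcal V_q\to\mathcal V_{q+1}$ on $D$, and these restrict to the fiberwise multiplication maps by base change. So we have a complex $\mathcal F_1\xrightarrow{\alpha}\mathcal F_2\xrightarrow{\beta}\mathcal F_3$ of locally free sheaves of constant ranks whose fibers are the fiberwise Koszul complexes. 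Vanishing at $d$ means $\operatorname{rk}\alpha(d)+\operatorname{rk}\beta(d)=\operatorname{rk}\mathcal F_2$.

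Since $\beta\alpha=0$, we always have $\operatorname{rk}\alpha(d)+\operatorname{rk}\beta(d)\leq\operatorname{rk}\mathcal F_2$. Rank is lower semicontinuous, since the locus where it is at least $r$ is the nonvanishing locus of an ideal of minors. Hence the locus where the sum is at least $\operatorname{rk}\mathcal F_2$ is the finite union over $a+b=\operatorname{rk}\mathcal F_2$ of $\{\operatorname{rk}\alpha\geq a\}\cap\{\operatorname{rk}\beta\geq b\}$, which is open. This proves that the vanishing locus is open.

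The main obstacle is the identification of fibers in the last step. We need $\pi_*$ of the tensor products, together with the multiplication maps, to commute with base change for $q=1,2,3$. That is exactly what the first step supplies. The only other care needed is that the construction uses the section ring rather than a homogeneous coordinate ring, so no very-ampleness hypothesis enters.
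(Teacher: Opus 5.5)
Your proposal is correct and follows the paper's proof step for step: cohomology and base change from the fiberwise vanishing of $H^1(C_d,L_d^{\otimes q})$, closedness of the image under the proper map $\pi$ of the support of $\operatorname{coker}(\pi^*\mathcal V_1\to\cL)$, and lower semicontinuity of the ranks of the differentials in the complex of bundles built from $\mathcal V_1,\mathcal V_2,\mathcal V_3$. The only slip is in a side remark you do not need. For $\pi_*\omega_{\cC/D}$ the fiberwise $H^1$ is one-dimensional rather than zero, so that case needs base change using $R^1\pi_*\omega_{\cC/D}\cong\cO_D$, not ``the same argument''; also, Grauert's theorem requires a reduced base, so cohomology and base change is the right result to cite.
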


\begin{proof}
Cohomology and base change, together with the vanishing of $H^1(C_d,L_d^{\otimes q})$ for $q\geq1$, give the assertion about $\mathcal V_q$. For $p=m-3$, the complex of vector bundles
\[
 \bigwedge^{p+1}\mathcal V_1\otimes\mathcal V_1\longrightarrow
 \bigwedge^p\mathcal V_1\otimes\mathcal V_2\longrightarrow
 \bigwedge^{p-1}\mathcal V_1\otimes\mathcal V_3
\]
computes $K_{p,2}(C_d;L_d)$ on each fiber. Its middle homology has upper semicontinuous dimension, because the ranks of the differentials are lower semicontinuous. Thus $K_{p,2}=0$ is an open condition in such a family. Failure of global generation is the image of the support of $\operatorname{coker}(\pi^*\mathcal V_1\to\cL)$, which is closed because $\pi$ is proper.
\end{proof}

\begin{prop}\label{prop:specialization}
Suppose that $C_0,\eta_0$ are as above, that \mbox{$u,v\in H^0(C_0,L_0)$} have no common geometric zero, and that the resulting matrix $\Phi_g$ has full column rank. Then \mbox{$K_{m-3,2}(C;\omega_C\otimes\eta)=0$} for a general complex Prym curve \mbox{$[C,\eta]\in\Rcal_g$}. In particular, the Prym--Green conjecture holds in genus $g$.
\end{prop}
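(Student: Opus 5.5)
The argument has three parts: first get the vanishing on the special curve, then spread it over a mixed-characteristic base, and finally transfer it to a general complex Prym curve by openness and irreducibility.

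\emph{The special fiber.} Since $u,v$ have no common geometric zero, Lemma~\ref{lem:pg-section-ring} and Lemma~\ref{lem:pg-matrix} apply to $(C_0,\eta_0)$. Full column rank of $\Phi_g$ then gives $K_{m-3,2}(C_0;L_0)=0$ over $k$. This vanishing persists after extending scalars to $\bar k$, because Koszul cohomology commutes with flat field extension.

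\emph{A family over a DVR.} Put $k=\F_q$ and let $\Lambda=W(k)$ be the Witt vectors, a complete DVR with residue field $k$ and fraction field of characteristic $0$. I lift the $2g$ distinct points of $\PP^1(k)$ to distinct $\Lambda$-points of $\PP^1_\Lambda$ and glue the sections pairwise. This gives a flat family of geometrically integral nodal rational curves over $\operatorname{Spec}\Lambda$, and I glue the trivial bundle with the same multipliers $\pm1$ to get a line bundle $\cE$ of order two on every fiber. Because the gluing is canonical, the special fiber of this family is $(C_0,\eta_0)$. Lemma~\ref{lem:pg-openness} makes the locus where $K_{m-3,2}=0$ open in $\operatorname{Spec}\Lambda$. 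It contains the closed point, so it is everything, and the generic fiber, a nodal Prym curve over a field of characteristic zero, satisfies the vanishing.

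\emph{Passing to smooth complex curves.} This is the main obstacle. The curve produced above is still nodal, so I must deform it, inside a family over characteristic zero, to smooth Prym curves. I plan to use the moduli stack $\overline{\Rcal}_g$ of Prym curves, which is proper and smooth over $\mathbb Z[1/2]$, with $\Rcal_g$ open and dense in every fiber. Pairs $(C,\eta)$ with $C$ irreducible nodal and $\eta$ a genuine nontrivial square root of $\cO_C$ form an open substack. Over it I take an étale (or smooth) atlas and carry the universal curve, equipped with its universal Prym bundle, over that atlas. Now apply Lemma~\ref{lem:pg-openness} to this family. The vanishing locus is open and contains the point corresponding to the generic fiber over $\Lambda$, or better, its image in $\overline{\Rcal}_g\otimes\mathbb Q$ after base change along a subfield of $\CC$. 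Two facts then finish the argument: $\overline{\Rcal}_g\otimes\CC$ is irreducible, and Koszul cohomology is invariant under field extension. Together they show the vanishing locus is a dense open subset, so it meets $\Rcal_g$ in a nonempty open set. The point that needs care is that the characteristic-zero point lies in the closure of $\Rcal_g$; this holds because every stable curve is smoothable and every Prym structure extends. I would instead run openness directly over $\mathbb Z[1/2]$: the point $(C_0,\eta_0)$ lies in the special fiber of the irreducible stack $\overline{\Rcal}_g$ over $\mathbb Z[1/2]$. Irreducibility holds because the fibers are irreducible and the stack is flat over the base. A nonempty open set in this stack meets the generic fiber, and it meets it in a dense open subset, which reaches the smooth locus.

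\emph{Conclusion.} For a general complex $[C,\eta]$ the bundle $L$ has no base points, so pencils with no common zero exist, and Lemma~\ref{lem:pg-equivalence} converts $K_{m-3,2}=0$ into a natural resolution. That is the Prym--Green conjecture in genus $g$.
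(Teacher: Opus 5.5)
Your proof is correct, and its first two steps match the paper's. The Witt vectors play the role of the paper's localized ring of integers; with a number field the embedding of the fraction field into $\CC$ is more evident. The real difference is how you reach smooth curves.

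The paper takes the complex nodal pair $(C_1,\eta_1)$ and builds one explicit smoothing, to which $\eta_1$ extends because Prym structures form a stack \'etale over $\overline{\mathcal M}_g$. It shrinks the base to keep the fibers integral, applies Lemma~\ref{lem:pg-openness} a second time, and invokes irreducibility of $\Rcal_g$ only at the end. You apply Lemma~\ref{lem:pg-openness} once, to the universal pair over an atlas of the open substack of integral nodal curves with a nontrivial $2$-torsion line bundle. You then use irreducibility of that substack over $\CC$. This irreducibility is equivalent to density of $\Rcal_g$ in the substack, which is the same smoothability input the paper uses, so the two arguments rest on the same facts.

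Your $\mathbb Z[1/2]$ variant is a genuine streamlining. The substack is \'etale over $\overline{\mathcal M}_g$, hence smooth and so flat over $\mathbb Z[1/2]$. The open vanishing locus through the $\F_p$-point therefore meets the characteristic-zero fiber, and no pinched family over a DVR is needed. This step uses only flatness and irreducibility of the complex fiber. You do not need properness of $\overline{\Rcal}_g$ or irreducibility of the characteristic-$p$ fibers, both of which you assert. The cost is that you must work with the moduli stack over $\mathbb Z[1/2]$ and a universal pair over a finite-type atlas. The paper needs only one explicit DVR family and uses the stack only locally.

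Finally, you assert without proof that $L$ is base-point-free for a general complex Prym curve. This is true, but it needs a line of justification. A base point $x$ of $K_C\otimes\eta$ forces $h^0(\eta(x))>0$. Then $\eta\cong\cO(y-x)$ with $y\neq x$ and $2x\sim 2y$, so $C$ is hyperelliptic. Alternatively, as the paper does, carry global generation along as a second open condition; it holds on $C_0$ because $u,v$ have no common zero.
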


\begin{proof}
We first lift the nodal pair to characteristic zero and then smooth it, applying Lemma~\ref{lem:pg-openness} at each step.

Choose a discrete valuation ring $\Lambda$, obtained by localizing the ring of integers of a number field, with residue field $k$ and fraction field $F$. Lift the $2g$ marked points to sections of $\PP^1_{\Lambda}$. They are pairwise disjoint, since their closed-fiber points are distinct. The pinching construction \cite[Theorem~5.4]{Fer} identifies these sections in the prescribed pairs. The resulting family has rational $g$-nodal fibers. It is flat over $\Lambda$ because its structure sheaf embeds in the pushforward of $\cO_{\PP^1_\Lambda}$ and is therefore torsion-free over the DVR. A positive-degree line bundle on $\PP^1_\Lambda$ descends after choosing identifications at the paired sections and is relatively ample, so the family is projective. Gluing the trivial line bundle with the same signs gives a line bundle of order two, nontrivial on both geometric fibers. Since $\Phi_g$ has full column rank, \eqref{eq:kernel} gives $K_{m-3,2}(C_0;L_0)=0$, and $L_0$ is globally generated by hypothesis. Both properties persist on the generic fiber. Extending scalars along an embedding $F\hookrightarrow\CC$ therefore produces a complex nodal Prym pair $(C_1,\eta_1)$ with $K_{m-3,2}(C_1;\omega_{C_1}\otimes\eta_1)=0$.

We next smooth the complex nodal pair $(C_1,\eta_1)$. The stack of stable curves equipped with a line bundle of exact order two and a trivialization of its square is \'etale over $\overline{\mathcal M}_g$ \cite[Section~1.2.1, final paragraph, p.~608]{CF17}. Consequently, locally on a deformation space of $C_1$, the line bundle $\eta_1$ and its trivialization extend. Since stable curves are smoothable, we may choose a one-parameter smoothing of the pair $(C_1,\eta_1)$. After shrinking the base, all fibers are geometrically integral. The same openness argument then gives a smooth Prym curve with a globally generated paracanonical bundle and the required vanishing.

Finally, the same section of \cite{CF17} states that the smooth Prym stack is connected and \'etale over $\mathcal M_g$, hence smooth and irreducible. Thus both properties hold on a dense open subset of $\Rcal_g$. For such a complex pair, choose a pencil in $H^0(C,L)$ with no common zero. Lemma~\ref{lem:pg-equivalence} then proves the Prym--Green conjecture.
\end{proof}

\section{Parameter choices and further simplifications}\label{sec:parameters}

We use a surjective multiplication map to eliminate variables from the equations defining $\ker\Phi_g$, then use cyclic symmetry to split the remaining system into blocks indexed by characters, making the computations feasible in genera $28$ and $30$.

\subsection{The curves, line bundles and sections}
Fix an odd prime number $p$. We will work over the field $\F_p$. To define the curve $C$, we choose $2g$ distinct numbers
\[
P_1,Q_1,\dots,P_g,Q_g\in \{0,\dots, p-1\},
\]
that we consider as points in $\PP^1(\F_p)$. The curve $C=C(P_1,Q_1,\dots,P_g,Q_g)$ is then defined by identifying $P_j$ and $Q_j$ for $j=1,\dots, g$. A line bundle on $C$ may be defined by giving the datum of a degree $d$ and elements $a_1,\dots,a_g\in \F_p^\times$. The global sections of this bundle are then polynomials $f\in \F_p[z]$ of degree $\leq d$ such that $f(P_j)=a_j f(Q_j)$ for $j=1,\dots,g$.

We define the line bundle $\eta$ by setting $d=0$ and $a_j=-1$, $j=1,\dots,g$. Put
\[
D(z) = \prod_{j=1}^g (z-P_j)(z-Q_j).
\]
Then the canonical bundle $K_C$ may be defined by setting $d=2g-2$ and
\[
a_j = - \frac{D'(P_j)}{D'(Q_j)}.
\]
More precisely, we understand a polynomial $f(z)$ as the section
\[
\frac{f(z)}{D(z)} dz.
\]
Then the compatibility relations state that the residues of these forms at $P_j$ and $Q_j$ have to be opposite, that is
\[
\frac{f(P_j)}{D'(P_j)} = - \frac{f(Q_j)}{D'(Q_j)}
\ \Leftrightarrow \
f(P_j) = - \frac{D'(P_j)}{D'(Q_j)} f(Q_j).
\]

We define the line bundle $L=K_C \otimes \eta$ -- this has $d=2g-2$ and $a_j = \frac{D'(P_j)}{D'(Q_j)}$. Let $W=H^0(C; L)\cong \F_p^{g-1}$ be the $\F_p$-vector space of global sections. Concretely, we consider elements of $W$ as polynomials $f$ of degree $\leq 2g-2$ such that
\begin{equation}\label{equ:gluing}
f(P_j) = \frac{D'(P_j)}{D'(Q_j)} f(Q_j) \quad\text{for $j=1,\dots,g$}.
\end{equation}

\subsection{\texorpdfstring{$C_m$}{Cm} Symmetry and choice of the \texorpdfstring{$P_j,Q_j$}{Pj,Qj}}
In the following let $g=2m$ be even. We choose the primes $p$ and the points $P_j,Q_j$ so that the resulting curve $C$ has an action of the cyclic group $C_m$. More precisely, let $\zeta\in \F_p$ be a primitive $m$th root of unity. Then we pick only $P, Q, P',Q'\in \F_p$ and set
\begin{align*}
P_j &= \zeta^{j-1} P & Q_j &= \zeta^{j-1} Q & \text{for $j=1,\dots,m$},
\\
P_j &= \zeta^{j-1} P' & Q_j &= \zeta^{j-1} Q' & \text{for $j=m+1,\dots,g$},
\end{align*}
with the implicit restriction on $P,Q,P',Q'$ that the $P_j$ and $Q_j$ thus constructed are all distinct. The automorphism $z\mapsto\zeta z$ preserves the node identifications and induces an action of the cyclic group $C_m$ on $C$. The action extends to $W=H^0(C;L)$. Representing elements of $W$ as polynomials $f$ of degree $\leq 2g-2$ as above, the action of the generator is explicitly
\[
f(z) \mapsto  \zeta f(\zeta z).
\]

Varying $P,Q,P',Q'$ gives a family of curves. For all six genera, we use
\begin{align*}
P&=1 & Q&=2 & P'&=4 & Q'&=10.
\end{align*}
The following table lists the corresponding choices of $p$ and $\zeta$.
\begin{center}
\begin{tabular}{c|c|c|}
$g$ & $p$ & $\zeta$ \\ \hline 20 & 661 & 190 \\ 22 & 661 & 9 \\ 24 & 1009 & 160 \\ 26 & 1093 & 11 \\ 28 & 1009 & 74 \\ 30 & 1051 & 136
\end{tabular}
\end{center}

\subsection{Artinian reduction}\label{sec:artinian}
Next we pick two sections $u,v\in W$ that satisfy the following conditions:
\begin{enumerate}
\item $u,v$ have no common zero on $\PP^1_{\overline{\F}_p}$.
\item $u,v$ must be $C_m$-eigenvectors. More precisely, let
\[
W_c = W\cap \vspan\{z^j \mid j+1\equiv c \mod m\}
\]
be the $\zeta^c$-eigenspace for the action of the $C_m$-generator. Then we require that $u\in W_1$ and $v\in W_{m-1}$.
\end{enumerate}

We first compute bases of the two-dimensional spaces $W_c$ ($c=1,m-1$) by solving \eqref{equ:gluing} for polynomials in $\vspan\{z^j \mid j+1\equiv c \mod m\}$. We then test pairs $u\in W_1$, $v\in W_{m-1}$ until we find a pair with no common zero. We check this condition by testing that $\gcd(u,v)=1$ and that $([z^{2g-2}]u,[z^{2g-2}]v)\neq (0,0)$, which excludes a common zero at $\infty$.

Having found $u,v$ we define as before
\begin{align*}
    V&=W/\langle u,v\rangle &\text{(dimension $g-3$)} \\
    A_0&= H^0(C;\omega_C) &\text{(dimension $g$)} \\
    A_1&= H^0(C;\omega_C\otimes L)/ \langle u,v\rangle A_0 &\text{(dimension $g-3$)}.
\end{align*}
For $x\in V$ we define the multiplication maps ($(g-3)\times g$-matrices)
\[
\mu_x\colon A_0 \to A_1 \quad a\mapsto xa.
\]
These matrices can be pre-computed, for $x$ ranging over a basis of $V$, and
\[
\Phi_g \colon \bigwedge^m V \otimes A_0 \to \bigwedge^{m-1} V \otimes A_1
\]
is fully expressed through the matrices $\mu_x$.

\subsection{A further reduction}\label{sec:further-reduction}
Let $w\in V$ be an eigenvector (for the $C_m$-generator $\zeta\cdot$) such that $\mu_w\colon A_0\to A_1$ is surjective. Write $\zeta\cdot w=\zeta^q w$.

For a $C_m$-representation $E$, we write $E_c$ for its $\zeta^c$-eigenspace, with $c$ taken modulo $m$.

Pick a $C_m$-stable complement $U$ of $w$ in $V$, i.e., $V=\langle w\rangle\oplus U$. Pick a right inverse $R\colon A_1\to A_0$ to $\mu_w$ such that $R\colon A_{1,c}\to A_{0,c-q}$. We have $\mu_w R= \mathit{id}_{A_1}$. Let $\iota\colon K_w:=\ker \mu_w\to A_0$ be the inclusion. Note that $\vdim K_w=3$.

Also, split
\[
\bigwedge^m V \otimes A_0
\cong
\bigwedge^m U \otimes A_0
\oplus
\langle w\rangle \otimes \bigwedge^{m-1} U \otimes A_0,
\]
and similarly
\[
\bigwedge^{m-1} V \otimes A_1
=
\bigwedge^{m-1} U \otimes A_1
\oplus
\langle w\rangle \otimes \bigwedge^{m-2} U \otimes A_1.
\]
Let
\[
D_k\colon\bigwedge^k U\otimes A_0\longrightarrow\bigwedge^{k-1}U\otimes A_1
\]
be the Koszul differential determined by the multiplication maps $\mu_s$ for $s\in U$. Then the equation $\Phi_g(x+w\wedge y)=0$ decomposes into the two equations
\begin{align*}
    D_m x + \mu_w y &=0 & D_{m-1}y=0.
\end{align*}
Write $y=\iota z+R y'$ with $y'\in \bigwedge^{m-1} U \otimes A_1$, $z\in \bigwedge^{m-1} U \otimes K_w$, then the first equation can be solved for $y'$ and we obtain
\[
y =\iota z -RD_m x.
\]
Inserting this into the second equation we obtain the single equation for $x,z$
\begin{equation}\label{equ:Fdef}
F(x,z) := D_{m-1}(\iota z -RD_m x ) =0.
\end{equation}
Hence
\begin{equation}\label{equ:Fdef2}
\begin{aligned}
F \colon \bigwedge^{m} U \otimes A_0 \oplus \bigwedge^{m-1} U \otimes K_w &\to \bigwedge^{m-2} U \otimes A_1
\end{aligned}
\end{equation}
satisfies
\[
\ker F \cong \ker \Phi_g .
\]
The matrix of $F$ has significantly smaller size
\[
\binom{g-4}{m-2}(g-3) \times \left( \binom{g-4}{m} g + 3 \binom{g-4}{m-1} \right).
\]

\subsection{Splitting into \texorpdfstring{$C_m$}{Cm}-isotypical components}
We next use the $C_m$-action to split the matrix of $F$ according to the isotypical components of this action. Concretely, suppose that $\zeta\cdot w=\zeta^q w$ -- in our program we choose $q=2$. The map $F$ splits into the components
\[
F_c \colon
\left( \bigwedge^{m} U \otimes A_0\right)_c \oplus \left(\bigwedge^{m-1} U \otimes K_w\right)_{c-q}
\to
\left( \bigwedge^{m-2} U \otimes A_1\right)_{c-q}.
\]
This decomposition replaces the matrix of $F$ by $m$ smaller matrices.

These curves form a special family, so the Prym--Green conjecture does not imply that $F$ is injective for our choices. For odd $m$ in the range considered, our computations show that the above choice suffices and $F$ has full column rank. More specifically, computation over $\F_p$ verifies the following proposition.
\begin{prop}\label{prop:F comp result}
    For the choices of $P_j,Q_j$ and $p$ detailed above and a suitable choice of $u,v,w$, we have:
    \begin{itemize}
    \item For $g=22,26,30$ the matrices $F_c$ have full column rank for each $c=0,\dots,m-1$.
    \item For $g=20,24,28$ the matrices $F_c$ have full column rank for each $c=1,\dots,m-1$, while $F_0$ has a two-dimensional kernel.
    \end{itemize}
\end{prop}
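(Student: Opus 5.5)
This statement is a computational verification, so the proof is a certified rank computation over $\F_p$. For each genus $g\in\{20,22,\dots,30\}$ we proceed in five steps.

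First, with the prime $p$, the root of unity $\zeta$ and the points $P,Q,P',Q'$ from the table, we check that $\zeta$ has multiplicative order exactly $m$ and that the $2g$ points $P_j,Q_j$ are distinct. We then solve the gluing conditions \eqref{equ:gluing}, and the analogous conditions for $\omega_C$ and $\omega_C\otimes L$, separately in each residue class of exponents modulo $m$. This produces eigenbases of $W$, $A_0$ and $H^0(C;\omega_C\otimes L)$, and we confirm the dimensions $g-1$, $g$ and $3(g-1)$.

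Second, we search $W_1\times W_{m-1}$ for a pair $u,v$ with $\gcd(u,v)=1$ and leading coefficients not both zero. We form $V$ and $A_1$, checking that $\dim A_1=g-3$, which holds exactly when $u,v$ have no common zero, by Lemma~\ref{lem:pg-section-ring}. We then precompute the matrices $\mu_x$ for $x$ in an eigenbasis of $V$.

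Third, we pick an eigenvector $w\in V_2$, so that $q=2$, and verify that $\mu_w$ has rank $g-3$. We compute $K_w$, checking that $\dim K_w=3$, and construct an equivariant right inverse $R$ blockwise on eigenspaces. We take $U$ to be the span of the remaining eigenbasis vectors of $V$. The correctness of the reduction $\ker F\cong\ker\Phi_g$ was established in Section~\ref{sec:further-reduction}, so it only needs to be implemented faithfully. A useful sanity check is to compare $\dim\ker F$ with $\dim\ker\Phi_g$ directly in small genera, for instance $g=10,12,14$, and to reproduce the extra syzygies that \cite{CEFS} found in genus $8$.

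Fourth, for each character $c$ we compute the rank of the block $F_c$, applying it as a black box through the formula $F(x,z)=D_{m-1}(\iota z-RD_mx)$ with Koszul signs. We compute these ranks with the block Wiedemann algorithm. To certify full column rank, it suffices to exhibit, for each $c$, a maximal nonsingular minor, or equivalently to check that $\ker F_c=0$. Since randomized Wiedemann can only underestimate rank, a run that returns full column rank is conclusive once the rank is certified. We certify it by computing the minimal generating matrix polynomial of $P F_c Q$ for random preconditioners $P,Q$ and confirming that its determinant has nonzero constant term.

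Fifth, for $c=0$ in genera $20,24,28$, we compute two independent kernel vectors. We verify directly that they lie in $\ker F_0$, and we show that the rank equals the column count minus $2$, so the kernel is exactly two-dimensional.

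The main obstacle is computational scale in genus $28$ and $30$. There the blocks $F_c$ have on the order of $10^8$ columns, so matrices cannot be stored, and the cost is dominated by the Krylov sequence of roughly $2N/\text{blocksize}$ black-box applications. We would first try GPU-parallel evaluation of the wedge-product index arithmetic, with a block size tuned to the hardware. A secondary subtlety is making the rank certification rigorous rather than merely probabilistic.
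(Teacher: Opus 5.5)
Your plan is essentially the paper's argument: a matrix-free block Wiedemann computation over $\F_p$ on each block $F_c$ (with $w\in V_2$, so $q=2$), applied to a preconditioned operator of rank at most $\rank F_c$ so that a certified lower bound equal to the column count proves injectivity, and, for $F_0$ when $m$ is even, two explicit kernel vectors checked by a direct residual computation together with a certified bound $\rank F_0\geq n_0-2$, where $n_0$ is its number of columns. Two small corrections: a nonzero constant term of the minimal generator's determinant alone does not certify nonsingularity of $PF_cQ$, since the projections may miss a zero eigenvalue; what certifies it is that the $\lambda$-free part of that determinant has degree equal to the size of $PF_cQ$, and the same degree bound is what yields $\rank F_0\geq n_0-2$. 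Also, $\dim A_1=g-3$ does not conversely force $u,v$ to have no common zero, so it is your $\gcd$ and leading-coefficient test, as in the paper, that establishes this.
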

In the latter case, the program also computes a basis of the kernel of $F_0$. Other choices of the parameters, points and group action did not yield injective matrices for even $m$ with comparable computation times. For even $m$, we therefore deform the chosen curves as described in the next subsection.

\subsection{Deformations for even \texorpdfstring{$m$}{m}}\label{sec:deformations}
Let $m$ be even. We will consider the formal deformation
\begin{align}\label{equ:PtQt}
P_j(t) &= P_j +t \dot P_j & Q_j(t) &= Q_j +t \dot Q_j,
\end{align}
with $\dot P_j$, $\dot Q_j$ in $\F_p$ and $t$ a formal parameter. The same formulas define a curve over $\F_p(t)$, whose base change to $\F_p(\!(t)\!)$ we use below. For $g=20,24,28$, we use the following deformation:
\begin{align}\label{equ:dotPdotQ}
\dot P_j &= \begin{cases}
    \zeta^{2j-2} & \text{for } j=1,\dots,m \\
    0 & \text{for } j=m+1,\dots,g
\end{cases},
&
\dot Q_j &= \begin{cases}
    1 & \text{for } j=1,\dots,m \\
    0 & \text{for } j=m+1,\dots,g
\end{cases}.
\end{align}
We make the auxiliary choices in the construction of $F$ over $\F_p\llbracket t\rrbracket$, starting from the choices at $t=0$. The section spaces form free modules and commute with reduction modulo $t$. We may therefore lift $u,v,w$ and the chosen bases. The pencil remains without a common geometric zero, and $\mu_w$ remains surjective, since both conditions are open and hold on the closed fiber.
In lifted bases, the chosen right inverse $R(0)$ extends to
\[
 R(t)=R(0)\bigl(\mu_w(t)R(0)\bigr)^{-1},
\]
since the matrix in parentheses is the identity modulo $t$. Complements and kernel bases can likewise be lifted. Thus the reduction of Section~\ref{sec:further-reduction} applies over $\F_p(\!(t)\!)$, and
\[
 \ker F(t)\cong\ker\Phi_g(t).
\]
The notation $F^{(i)}$ below refers to the Taylor coefficients of this matrix for the actual family of Prym curves. The specialization argument does not require the chosen formal bases to have rational entries in $t$.

The following lemma reduces the finite-field construction to injectivity of $F(t)$ over $\F_p(\!(t)\!)$.

\begin{lem}\label{lem:finite-field-specialization}
Suppose $F(t)$ has zero kernel over $\F_p(\!(t)\!)$. Then there are a finite extension $k/\F_p$ and $a\in k$ such that the points $P_j(a),Q_j(a)$ are pairwise distinct and the corresponding Prym curve admits sections $u_a,v_a$ with no common geometric zero for which $\Phi_g$ has zero kernel.
\end{lem}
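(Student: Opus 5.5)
The plan is to pass from injectivity over $\F_p(\!(t)\!)$ to injectivity of $\Phi_g$ over the function field $\F_p(t)$ of the actual one-parameter family, and then to specialize at a general point of the affine $t$-line over a finite extension of $\F_p$.

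\emph{Step 1: transfer injectivity to $\Phi_g$.} By the reduction of Section~\ref{sec:further-reduction} carried out over $\F_p(\!(t)\!)$, we have $\ker F(t)\cong\ker\Phi_g(t)$. Hence the hypothesis gives that $\Phi_g$, formed for the generic curve with the lifted pencil $u(t),v(t)$, has full column rank over $\F_p(\!(t)\!)$. By Lemma~\ref{lem:pg-matrix}, $\dim\ker\Phi_g=\dim K_{m-3,2}$, and this dimension does not depend on the pencil chosen, provided the pencil has no common zero. So $K_{m-3,2}(C_\eta;L_\eta)\otimes\F_p(\!(t)\!)=0$, where $C_\eta$ is the curve over $\F_p(t)$ defined by \eqref{equ:PtQt}. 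Koszul cohomology is computed by a complex of finite-dimensional $\F_p(t)$-vector spaces, namely section spaces of $L^{\otimes q}$ and their wedge powers, and forming this complex commutes with the field extension $\F_p(t)\subset\F_p(\!(t)\!)$. Therefore $K_{m-3,2}(C_\eta;L_\eta)=0$ over $\F_p(t)$.

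\emph{Step 2: spread out over an open subset of $\Aff^1$.} Let $U\subset\Aff^1_{\F_p}$ be the open set where the $2g$ points $P_j(t),Q_j(t)$ are pairwise distinct. Where the construction requires it, also shrink $U$ so that the gluing multipliers $D'(P_j)/D'(Q_j)$ are defined and invertible. This $U$ is nonempty, since it contains $t=0$. Over $U$ the pinching construction used in the proof of Proposition~\ref{prop:specialization} gives a flat projective family of geometrically integral rational $g$-nodal curves, together with the order-two bundle $\eta$ glued by the sign $-1$. Lemma~\ref{lem:pg-openness} now applies. The locus where $K_{m-3,2}=0$ is open and contains the generic point by Step 1, so it is a nonempty open $U'\subset U$. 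Moreover, $L$ is globally generated on the closed fiber at $t=0$, hence on an open set containing the generic point. Shrink $U'$ to include this condition as well.

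\emph{Step 3: choose the point and the pencil.} $U'$ is the complement of finitely many closed points of $\Aff^1$, so it has a $k$-point $a$ for some finite extension $k/\F_p$. Because there are infinitely many such points, we may even take $k=\F_{p^r}$ for large $r$. At $a$, $L_a$ is globally generated, so a general pencil $u_a,v_a\in H^0(C_a,L_a)$ has no common geometric zero. Indeed, the base locus condition is open and nonempty in the Grassmannian of pencils; enlarge $k$ if needed to get a rational point of this open set. Lemma~\ref{lem:pg-matrix} then gives $\dim\ker\Phi_g=\dim K_{m-3,2}(C_a;L_a)=0$.

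\emph{Main obstacle.} The main subtle point is Step 1. Injectivity of $F(t)$ is stated in formal, non-rational bases. We must make sure that the vanishing of Koszul cohomology descends from $\F_p(\!(t)\!)$ to the actual family over $\F_p(t)$, and we must avoid confusing the pencil-dependent matrix with the intrinsic group. Using the pencil-independence in Lemma~\ref{lem:pg-matrix} together with flat base change for finite-dimensional linear algebra handles this.
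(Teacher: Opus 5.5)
Your proof is correct and follows essentially the same route as the paper: the reduction $\ker F(t)\cong\ker\Phi_g(t)$ together with Lemma~\ref{lem:pg-matrix} gives the vanishing of $K_{m-3,2}$ over $\F_p(\!(t)\!)$, faithfully flat base change gives it over $\F_p(t)$, and Lemma~\ref{lem:pg-openness} spreads it to a nonempty open subset of the parameter line, where you take a closed point. The only difference is how you get the pencil. The paper lifts $u(0),v(0)$ to sections of $\pi_*\cL$ and shrinks $U$ using properness of the common-zero locus. You instead spread out global generation and choose a general base-point-free pencil at $a$, enlarging $k$ if needed; this is equally valid.
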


\begin{proof}
The moving marked points define a family of rational $g$-nodal curves $\pi\colon\cC\to U$ over an open neighborhood $U$ of $0$ in $\Aff^1_{\F_p}$. To construct this family, remove the finitely many parameters at which two marked points coincide, and identify the corresponding sections in pairs. Use the same gluing signs to define the relative line bundle $\cE$ of order two, and put $\cL=\omega_{\cC/U}\otimes\cE$.

By Lemma~\ref{lem:pg-openness}, the sheaves $\pi_*\cL^{\otimes q}$ are locally free and commute with base change for $q\geq1$. After shrinking $U$ around $0$, lift the chosen sections $u(0),v(0)$ to sections $u,v$ of $\pi_*\cL$. Their common-zero locus has closed image in $U$ because $\pi$ is proper. This image does not contain $0$, so we may shrink $U$ further to ensure that $u,v$ have no common geometric zero on any fiber.

By the reduction above and Lemma~\ref{lem:pg-matrix}, injectivity of $F(t)$ gives
\[
 K_{m-3,2}(C_{\F_p(\!(t)\!)};L_{\F_p(\!(t)\!)})=0.
\]
The Koszul complex commutes with the field extension $\F_p(t)\subset\F_p(\!(t)\!)$. Faithful flatness therefore gives the same vanishing on the generic fiber over $U$. By Lemma~\ref{lem:pg-openness}, this vanishing holds on a nonempty open subset $U^\circ\subset U$. Choose a closed point of $U^\circ$. Its residue field $k$ is finite over $\F_p$, and its parameter value is an element $a\in k$. The marked points are distinct and the specialized pencil has no common geometric zero. Lemma~\ref{lem:pg-matrix} therefore gives $\ker\Phi_g=0$.
\end{proof}

By multiplying by a power of $t$ we may always assume that (potential) kernel vectors of $F(t)$ have the form
\[
v(t) = v_0+tv_1+t^2v_2+\cdots  \quad \text{with $v_0\neq 0$}.
\]
In the following let us introduce the notation
\[
F(t) = F^{(0)} + t F^{(1)} + t^2 F^{(2)} + O(t^3),
\]
with all block indices below referring to the source and target decompositions of $F_c$ at $t=0$, including the shifts by $q$. Up to quadratic order the obstruction problem then asks for a solution of the linear system
\begin{align*}
    F^{(0)} v_0 &=0
    \\
    F^{(1)} v_0 + F^{(0)} v_1 &=0
    \\
    F^{(2)} v_0 + F^{(1)} v_1 + F^{(0)} v_2 &=0.
\end{align*}

In particular, $v_0$ is in the kernel of $F^{(0)}$ defined over $\F_p$ as studied above, and hence a linear combination of the two kernel basis vectors found. We will show that this system has no solution $(v_0,v_1,v_2)$ with $v_0\neq0$. Thus no nonzero element of $\ker F^{(0)}$ extends to a kernel vector modulo $t^3$.

\subsubsection{$C_m$-equivariant analysis for adapted choices}
To analyze how the deformation defined by \eqref{equ:PtQt} and \eqref{equ:dotPdotQ} mixes the $C_m$-isotypical components, we introduce a two-parameter family
\begin{align*}
P_j(t_{-1},t_1) &= \begin{cases}
    \zeta^{j-1} + t_{-1} \zeta^{2j-2} & \text{for } j=1,\dots,m \\
    4 \zeta^{j-1} & \text{for } j=m+1,\dots,g
\end{cases},
\\
Q_j(t_{-1},t_1) &= \begin{cases}
    2\zeta^{j-1} + t_1 & \text{for } j=1,\dots,m \\
    10 \zeta^{j-1} & \text{for } j=m+1,\dots,g
\end{cases}.
\end{align*}
This configuration is $C_m$-invariant if we extend the action of $C_m$ to the parameter space by the assignment
\begin{align*}
    \zeta \cdot t_{\pm 1} = \zeta^{\pm 1} t_{\pm 1}.
\end{align*}
We make the auxiliary choices in the construction of $F$ compatibly with this $C_m$-action. We extend $u,v,w$ to sections $u(t_{-1},t_1),v(t_{-1},t_1),w(t_{-1},t_1)$ of characters $1,m-1,2$, respectively, and choose the complement $U$, right inverse $R$, and bases of $A_0,A_1,K_w$ to respect the action. The resulting matrix has an expansion
\[
F(t_{-1},t_1)
= \sum_{a,b\geq 0} F^{(a,b)} t_{-1}^a t_1^b
\]
in which $F^{(a,b)}$ transforms according to the character $a-b$ under the $C_m$-action. Equivalently, $F^{(a,b)}$ sends the $c$-isotypical component to the $c+a-b$-isotypical component.

The substitution
\begin{equation}\label{equ:t sub}
t_{-1}=t_1=t
\end{equation}
recovers the original one-parameter deformation and breaks the $C_m$-symmetry. The Taylor coefficients of $F(t)$ satisfy
\[
F^{(j)} = \sum_{a,b\geq 0 \atop a+b=j} F^{(a,b)}.
\]
In particular, we obtain the following conclusions:
\begin{enumerate}
    \item $F^{(0)}$ only has the diagonal blocks $F_c^{(0)}=F_{c,c}^{(0)}$ relative to the $C_m$-isotypical decomposition.
    \item $F^{(1)}$ only has blocks $F_{c\pm 1,c}^{(1)}$ that send the $c$-isotypical component to that of character $c\pm 1$.
\end{enumerate}

\begin{rem}
We remark that our program internally does not actually compute two-parameter families explicitly, but only one parameter families modulo $t^3$. However, it makes choices that produce data that are compatible with the $C_m$-action in the above sense. Below it will suffice to know only the above properties (1) and (2) of $F(t)$, which the program enforces, respectively checks. The analysis of this section shows that there exist such choices, but it should not be seen as a direct algorithmic description of the program.
\end{rem}

\subsubsection{Vanishing of the first-order obstruction}
The first-order obstruction equation reads
\[
 F^{(1)} v_0 + F^{(0)} v_1=0.
\]
Let $K$ be the two-column matrix containing the basis vectors of $\ker F^{(0)}$, so that $F^{(0)}K=0$. Then we need to solve the system
\begin{equation}\label{equ:SK}
F^{(0)} S = - F^{(1)} K
\end{equation}
for the two-column matrix $S$. We analyze \eqref{equ:SK} using these source and target decompositions into $C_m$-isotypical components. The matrix $F^{(0)}$ is block diagonal with blocks $F_c^{(0)}$ indexed by $c$. All these have full column rank except for $F_0^{(0)}$, by Proposition~\ref{prop:F comp result}. Hence the only nonzero block of $K$ is the block $K_0$ indexed by $0$.
Write again $F_{a,b}^{(i)}$ for the block from source block $b$ to target block $a$. By the analysis of the previous subsection, the block $F_{0,0}^{(1)}$ vanishes, and the only possibly nonzero blocks of $F^{(1)}$ with source block $0$ are $F_{\pm 1,0}^{(1)}$. (We assume here that the choices in our construction have been made so as to be compatible with the $C_m$-action, as described in the previous subsection.) We therefore solve the following two independent systems over $\F_p$:
\begin{align}\label{equ:SK decomp}
 F_1^{(0)} S_1  &= - F_{1,0}^{(1)} K_0
 &
 F_{-1}^{(0)} S_{-1}  &= - F_{-1,0}^{(1)} K_0.
\end{align}

We find that in all cases considered ($g=20,24,28$) this system is solvable and hence there is no first-order obstruction. We also remark that the solution for $S$ is not unique, but unique up to adding elements in $\ker F^{(0)}$ to its columns.

\subsubsection{Second-order obstruction}
We write the second-order obstruction equation
\[
F^{(2)} K \alpha + F^{(1)} (S\alpha +K \beta) + F^{(0)} v_2 =0,
\]
where $\alpha,\beta\in\F_p^2$ are to be solved for along with $v_2$. We claim that this linear equation has no solution with $\alpha\neq 0$. First simplify by noting that by \eqref{equ:SK}
\[
F^{(1)} K \beta = - F^{(0)} S \beta,
\]
so that, replacing $v_2$ by $v_2-S\beta$, it suffices to show that
\begin{equation}\label{equ:Z full}
Z \alpha + F^{(0)} v_2 =0
\end{equation}
has no solution for $\alpha\neq 0, v_2$, where
\[
Z:= F^{(2)} K + F^{(1)}S.
\]
Projecting \eqref{equ:Z full} to target block $0$ and using again that $F^{(0)}$ is block-diagonal, it suffices to show that
\begin{equation}
Z_0 \alpha + F_0^{(0)} v_{2,0} =0
\end{equation}
has no solution for $\alpha, v_{2,0}$ with $\alpha\neq 0$, where $Z_0$ is the component of $Z$ in target block $0$. To check this, we may verify that
\[
\rank [F_0^{(0)} \ Z_0 ] = 2+\rank F_0^{(0)}.
\]
Since $F_0^{(0)}$ has a two-dimensional kernel, an equivalent formulation is as follows. Let $i$, $j$ be two indices such that the $i$th and $j$th rows of the kernel basis $K_0$ of $F_0^{(0)}$ form an invertible $2\times 2$ matrix. Let $F_0^{(0),red}$ be obtained by omitting the $i$th and $j$th columns of $F_0^{(0)}$, so that the images of $F_0^{(0),red}$ and $F_0^{(0)}$ are the same. Then the condition to be checked is that the \emph{replacement matrix}
\[
F_{rep}:=[F_0^{(0),red} \ Z_0 ]
\]
has full column rank. Note that $F_{rep}$ has the same size as $F_0^{(0)}$. Also note that, using our explicit isotypical decompositions, we have the formula
\[
Z_0 = F_{0,0}^{(2)} K_0 + F_{0,1}^{(1)}S_1 + F_{0,-1}^{(1)}S_{-1}.
\]

So the overall verification algorithm for $m$ even is as follows:
\begin{enumerate}
    \item Compute rank lower bounds showing that $F_c^{(0)}$ has full column rank for $c\neq0$ and that $F_0^{(0)}$ has rank at least $n_0-2$, where $n_0$ is its number of columns.
    \item Compute two independent columns $K_0$ and verify $F_0^{(0)}K_0=0$. Together with the lower bound, this proves that $K_0$ is a kernel basis.
    \item Compute $S_{\pm1}$ and verify the solution identities \eqref{equ:SK decomp}.
    \item Choose the deleted columns using an invertible $2\times2$ row minor of $K_0$, form the replacement matrix $F_{rep}$, and show that $F_{rep}$ has full column rank by computing a rank lower bound.
\end{enumerate}
In all cases considered, $F_{rep}$ has full column rank. The second-order obstruction therefore excludes every nonzero element of $\ker F^{(0)}$, and $F(t)$ is injective over $\F_p(\!(t)\!)$.

\section{Implementation and computational details}\label{sec:computations}
\subsection{Block Coppersmith-Wiedemann Algorithm}
The algorithm above requires two computations:
\begin{itemize}
    \item Compute the rank of a sparse matrix over $\F_p$.
    \item Compute a basis for the kernel of a sparse matrix over $\F_p$ whose kernel has small dimension.
\end{itemize}
We use an implementation of the block Coppersmith--Wiedemann (BCW) algorithm. This is a black-box method that requires only the action of the operator on vectors, which we compute from the defining formula for $F$ without storing its matrix.

We will give here a sketch of the BCW algorithm. A complete treatment is beyond the scope of this paper, but a good reference is \cite{Skipness}. The BCW algorithm is based on the following general idea. Suppose $A$ is a symmetric linear operator. Pick random vectors $v_1,\dots,v_N$ in the domain of $A$. Then build the sequence of matrices for $r=0,1,2,\dots$
\begin{equation}\label{equ:wiedemann seq}
(v_i^T A^r v_j)_{i,j}.
\end{equation}
From a sufficiently long initial segment of this sequence, one computes a minimal matrix generator, a polynomial matrix describing its linear recurrence relations. This generator yields a lower bound for the rank of $A$; see \cite{Skipness} for the reconstruction and its hypotheses. We leave the details of this reconstruction and correctness proofs to \cite{Skipness}, but we remark on several features of the algorithm:
\begin{itemize}
\item To obtain the matrix sequence, we only need to implement the action of $A$ on a vector. In particular, the algorithm avoids fill-in, the appearance of new nonzero matrix entries during Gaussian elimination.
\item The BCW algorithm is a randomized algorithm. It produces a provably correct lower bound for the rank of $A$. This bound equals the rank with high probability for generic $A$, or for $A$ that is subjected to random preconditioning.
\item Our matrices need not be generic, and dense matrix--vector multiplication is too costly for these computations. We therefore use preconditioners that can be applied efficiently.
\item The BCW algorithm as presented here operates on a symmetric matrix $A$. If $A$ is not symmetric or is rectangular, one applies the algorithm instead to
\[
D_1^T A^T D_2 A D_1,
\]
with $D_1$ and $D_2$ suitably chosen preconditioning matrices. Here the least expensive choice of preconditioner is to take random diagonal matrices for $D_1$ and $D_2$. This did not suffice in our computations because the rank estimates returned by BCW were often smaller than the rank. Instead, we apply in addition a row mixing step.
\item An extension of the BCW algorithm can also compute $N$ kernel vectors. This requires a second sequence computation after the first BCW computation, to recover the kernel vectors.
\end{itemize}

For a rectangular matrix $A$, the preconditioned operator $H=D_1^T A^T D_2 A D_1$ satisfies $\rank H\leq\rank A$. Thus a guaranteed lower bound for $\rank H$ equal to the number of columns of $A$ proves injectivity of $A$. A vector $x\in\ker H$ gives a candidate $D_1x$ for $\ker A$; its residual $A(D_1x)$ must be checked directly.

For the larger matrices, generating the sequence \eqref{equ:wiedemann seq} accounts for most of the computation time. For smaller matrices, reconstructing the rank information from the sequence can take longer.

\subsection{Implementation details}
We use a custom GPU (CUDA) implementation of the sequence computation \eqref{equ:wiedemann seq} for the matrices considered. To recover rank estimates, lower bounds, and kernel vectors from these sequences, we use the CPU routines of \cite{Skipness}.

The sequence computation is matrix-free. The CUDA program applies the defining formula \eqref{equ:Fdef} without computing and storing the matrices $F_c$. We precompute and store the much smaller multiplication matrices $\mu_x$ on the CPU for use in the GPU computation.

The rank computations for the $m$ blocks can be run in parallel. We use a scheduler that uses 2 GPUs and 64 CPU cores to run the GPU and CPU parts of these computations in parallel. All experiments have been performed on servers with 64-core AMD EPYC CPUs, 1TB of RAM and two NVIDIA RTX PRO 6000 Blackwell GPUs with 96GB of RAM each.

\subsection{Computational results}
With the parameter values detailed above, the computations succeeded for even $g$ between 20 and 30, thus showing the following result.

\begin{prop}\label{prop:computational-result}
For \mbox{$g\in\{22,26,30\}$} and the parameter values specified above, the matrix $\Phi_g$ is injective over $\F_p$. Moreover, for \mbox{$g\in\{20,24,28\}$} and the deformations specified above, the matrix $\Phi_g(t)$ is injective over $\F_p(\!(t)\!)$.
\end{prop}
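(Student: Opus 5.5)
The proposition is a computational certificate. The plan is to reduce it, via the results of Section~\ref{sec:parameters}, to a finite list of rank lower bounds and explicit identity checks over $\F_p$, each verifiable by the block Coppersmith--Wiedemann computation.

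\textbf{Odd $m$ ($g=22,26,30$).} Start from the isomorphism $\ker F\cong\ker\Phi_g$ of Section~\ref{sec:further-reduction}. For it to apply, three facts must be checked over $\F_p$: the pencil $u\in W_1$, $v\in W_{m-1}$ has $\gcd(u,v)=1$ and a nonvanishing leading coefficient; $\mu_w$ is surjective; and $R$ is a right inverse respecting characters. All three are small exact linear-algebra checks.

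Since $F$ commutes with the $C_m$-action, it is block diagonal, so $\ker F=\bigoplus_c\ker F_c$. It therefore suffices to certify, for each $c$, that $F_c$ has full column rank. For this, apply BCW to the preconditioned operator $H_c=D_1^TF_c^TD_2F_cD_1$. Its minimal matrix generator gives a provable lower bound on $\rank H_c\le\rank F_c$, and we need this bound to equal the number of columns. A failure of the randomized method only produces an inconclusive bound, never a false certificate, so one reruns with fresh random preconditioners until the bound is attained.

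\textbf{Even $m$ ($g=20,24,28$).} Here follow the four-step algorithm of Section~\ref{sec:deformations}.
\begin{enumerate}
\item Certify full column rank of $F_c^{(0)}$ for $c\ne0$, and $\rank F_0^{(0)}\ge n_0-2$.
\item Exhibit $K_0$ and check $F_0^{(0)}K_0=0$ exactly, which identifies $K_0$ as a kernel basis.
\item Solve the two systems \eqref{equ:SK decomp} and check the identities exactly.
\item Form $F_{rep}$ and certify its full column rank by BCW.
\end{enumerate}
By the second-order obstruction argument, these steps show that no $v_0\neq0$ in $\ker F^{(0)}$ lifts to a kernel vector modulo $t^3$. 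Since any kernel vector of $F(t)$ can be normalized to have $v_0\ne0$, $F(t)$ is injective over $\F_p(\!(t)\!)$, and hence so is $\Phi_g(t)$.

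The step using block decompositions needs properties (1) and (2) of $F^{(0)}$ and $F^{(1)}$. These must either be enforced by the equivariant choices or checked directly on the computed Taylor coefficients. I would check them directly: verify that the off-pattern blocks vanish, rather than rely on the two-parameter existence argument.

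\textbf{Main obstacle.} The difficulty is scale. The genus-30 blocks are very large, and the genus-28 replacement matrix $F_{rep}$ has a dense-ish column block $Z_0$. A second difficulty is that cheap diagonal preconditioning gave deficient rank bounds. I would first add a row-mixing preconditioner that can still be applied cheaply, and implement the operator matrix-free on GPUs from the defining formula \eqref{equ:Fdef}. The final safeguard is that the certificate's correctness rests only on the proven lower-bound property of the generator and on exact residual checks.
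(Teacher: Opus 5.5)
Your proposal is correct and follows the paper's argument. You reduce $\Phi_g$ to $F$ via the surjective $\mu_w$, split $F$ into the character blocks $F_c$, and certify full column rank by BCW rank lower bounds for odd $m$; for even $m$ you run the same four-step first- and second-order obstruction check, ending with full column rank of $F_{rep}$, and your plan to verify the block-structure properties (1) and (2) directly on the computed Taylor coefficients matches the paper's program, which enforces (1) and checks (2).
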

Table~\ref{tab:sizes} gives matrix dimensions and approximate runtimes. For $g=2m$, the computation requires $m$ BCW passes when $m$ is odd and $m+6$ when $m$ is even. There is one pass for each matrix $F_c$. For even $m$, the six additional passes comprise one to recover a basis of $\ker F_0^{(0)}$, two for each of the first-order systems \eqref{equ:SK decomp}, and one to show that the replacement matrix $F_{rep}$ has full column rank.

GPU times refer to a single BCW pass on one GPU. Total elapsed times include CPU pre- and post-processing, with the computations scheduled across two GPUs and 64 CPU cores. All timings are approximate and were measured on shared servers, where concurrent resource use by other users affects runtimes.

All computer code can be found at the following repository.

\medskip

\url{https://github.com/wilthoma/prym_green_ancillary}

\begin{table}[htbp]
  \centering
  \small
  \setlength{\tabcolsep}{4pt}
  \renewcommand{\arraystretch}{1.2}
  \resizebox{\textwidth}{!}{%
    \begin{tabular}{rccrrr}
      \toprule
      $g$ & Size of $\Phi_g$ & Size of $F_c$
      & \shortstack{BCW\\passes}
      & \shortstack{GPU time\\per pass}
      & \shortstack{Total elapsed\\time} \\
      \midrule
      20 & $413\,270\times388\,960$
         & $\{21\,886,21\,872\}\times19\,448$
         & 16 & $0.35\,\mathrm{s}$ & $1.0\,\mathrm{min}$ \\
      22 & $1\,755\,182\times1\,662\,804$
         & $83\,980\times75\,582$
         & 11 & $2.0\,\mathrm{s}$ & $2.1\,\mathrm{min}$ \\
      24 & $7\,407\,036\times7\,054\,320$
         & $\{323\,302,323\,344\}\times293\,930$
         & 18 & $22\,\mathrm{s}$ & $7.5\,\mathrm{min}$ \\
      26 & $31\,097\,794\times29\,745\,716$
         & $1\,248\,072\times1\,144\,066$
         & 13 & $5.7\,\mathrm{min}$ & $49\,\mathrm{min}$ \\
      28 & $130\,007\,500\times124\,807\,200$
         & $\{4\,828\,916,4\,828\,784\}\times4\,457\,400$
         & 20 & $1.50\,\mathrm{h}$ & $15.9\,\mathrm{h}$ \\
      30 & $541\,574\,100\times521\,515\,800$
         & $18\,721\,080\times17\,383\,860$
         & 15 & $31.6\,\mathrm{h}$ & $10.6\,\mathrm{d}$ \\
      \bottomrule
    \end{tabular}%
  }
  \caption{\label{tab:sizes}
  Matrix dimensions (rows $\times$ columns), BCW passes, and approximate runtimes for $g=2m$. Paired row counts for $F_c$ reflect the parity of $c$. Timing conventions are described in the text.}
\end{table}

\begin{proof}[Proof of Theorem~\ref{thm:main}]
For $g=22,26,30$, Proposition~\ref{prop:computational-result} supplies rational nodal Prym curves over finite fields for which $\Phi_g$ has full column rank. The pencils used in their construction have no common geometric zero by the checks in Section~\ref{sec:artinian}. For $g=20,24,28$, Proposition~\ref{prop:computational-result} gives injectivity of $\Phi_g(t)$ over $\F_p(\!(t)\!)$, equivalently of $F(t)$, and Lemma~\ref{lem:finite-field-specialization} supplies finite-field Prym curves satisfying the same hypotheses. Proposition~\ref{prop:specialization} then proves the required vanishing for a general complex Prym curve in each of the six genera.
\end{proof}

\bigskip

\end{document}